\documentclass[10pt]{amsart}
\usepackage{epsfig}
\usepackage{mathtools}
\usepackage[subnum]{cases}
\usepackage{enumerate}
\usepackage{bbm,bm}
\usepackage{amsmath,amssymb,mathrsfs,amsthm}
\usepackage{color}
\usepackage{hyperref}
\usepackage{verbatim}
\usepackage{tikz}
\usepackage{multicol}
\usepackage{multirow}
\usepackage{wrapfig}
\hypersetup{
    colorlinks,
    citecolor=black,
    filecolor=black,
    linkcolor=black,
    urlcolor=black
}

\usepackage{subfig}
\usepackage{listings}
\captionsetup[lstlisting]{margin=0cm,format=hang,font=small,format=plain,labelfont={bf,up},textfont={it}}

\definecolor{gris245}{RGB}{245,245,245}
\definecolor{olive}{RGB}{50,140,50}
\definecolor{brun}{RGB}{175,100,80}

\usepackage{caption}

\newtheorem{lem}{Lemma}
\newtheorem{prop}{Proposition}
\newtheorem{cor}{Corollary}

\newtheorem*{remark*}{Remark}

\newtheorem{thm}{Theorem}

\newcommand{\sech}{\text{sech}}




\begin{document}

\title{A note concerning polyhyperbolic and related splines} 
	\author{Jeff Ledford*}
    \thanks{The funding for this work was provided by the Perspectives on Research In Science \& Mathematics (PRISM) program at Longwood University.}
        \author{Luke Paris}
	\author{Ryan Urban}
        \author{Alec Vidanes}
        
\address{Jeff Ledford\newline
	\indent Longwood University \newline
	\indent Department of Mathematics and Computer Science \newline
	\indent Farmville, VA, USA}
\email{ledfordjp@longwood.edu}

\address{Luke Paris\newline
	\indent Longwood University \newline
	\indent Department of Mathematics and Computer Science\newline
	\indent Farmville, VA, USA}
\email{luke.paris@live.longwood.edu}

\address{Ryan Urban\newline
	\indent Longwood University \newline
	\indent Department of Mathematics and Computer Science\newline
	\indent Farmville, VA, USA}
\email{ryan.urban@live.longwood.edu}

\address{Alec Vidanes\newline
	\indent Longwood University \newline
	\indent Department of Mathematics and Computer Science\newline
	\indent Farmville, VA, USA}
\email{alec.vidanes@live.longwood.edu}



\begin{abstract}
		
	 This note concerns the interpolation problem with two parametrized families of splines related to polynomial spline interpolation.  The authors address the questions of uniqueness and establish basic convergence rates for splines of the form $ s_\alpha = p\cosh(\alpha\cdot)+q\sinh(\alpha \cdot)$ and $t_\alpha = p+q\tanh(\alpha \cdot) $ between the nodes where $p,q\in\Pi_{k-1}$.
		\vspace{5pt}
		\newline
		\textbf{Keywords: }{spline interpolation, convergence rate, band limited data interpolation.} 
		
		\vspace{5pt}
		\noindent
		\textbf{2020 Mathematics Subject Classification:} 41A05, 41A15, 41A25.
	\end{abstract}

\maketitle
\pagestyle{myheadings}
\markboth{Author Names}{Title of article}

\section{Introduction}

This note continues the study of splines, popularized by Schoenberg \cite{ Schoenberg_Part_A,SCHOENBERG,MR0053177}.  The goal of this paper is to develop basic properties of two parametrized families of splines.  Specifically, we address questions of uniqueness and convergence rate similar to those found in \cite{AHLBERG,Birkhoff_deBoor,Hall,Pruess,Spath}.  The first family of splines satisfy $s\in C^{2k-2}[a,b]$ and
\begin{equation}\label{polyhyperbolic}
(D^2-\alpha^2)^k s =0\quad\text{ on } [a,b]\setminus X,
\end{equation}
where $\alpha>0$ and $X$ is a partition of $[a,b]$.  Following \cite{ledford}, we call these \emph{k-th order polyhyperbolic} splines.  These splines are examples of $L$-splines, \cite[Ch. 10]{Schumaker_book}. The hyperbolic designation was chosen because the homogeneous solution of \eqref{polyhyperbolic} is given by 
\[s_\alpha(x)=p(x)\cosh(\alpha x)+q(x)\sinh(\alpha x),
\]
where $p$ and $q$ are polynomials of degree at most $k-1.$
We note that, in the literature, splines corresponding to the differential equation  $D^2(D^2-\alpha^2) u =0 $ and are often called hyperbolic but they are also referred to by the terms tension or exponential.  These splines have been studied extensively, see \cite{Maciejewski, McCartin, Pruess,Spath} among many others.  We also find a different notion for hyperbolic splines in \cite{Schumaker}, these correspond to the differential equation \[(D^2-(2k-1)^2)\cdots(D^2-3^2)(D^2-1)u=0 \quad\text{ or }\quad (D^2-(2k)^2)\cdots(D^2-2^2)Du=0.\]

Interpolation schemes which depend on parameters have appeared throughout the literature.  Polyhyperbolic splines are similar to (but distinct from) splines in tension \cite{Pruess} and the GB splines of \cite{Ksasov}, although the do share the property that in the appropriate limiting sense, these schemes lead to polynomial spline interpolation.  For the case studied here, we expect to recover cubic splines, since as $\alpha \to 0$, the differential operator reduces to $D^4$.

The second family of splines are built out of $\tanh$, specifically these are piecewise combinations of $t=p+q\tanh(\alpha \cdot)$, where $p$ and $q$ are polynomials.  We can characterize the order these by the maximal degree of the polynomials; the $k$-th order splines have polynomials whose degree does not exceed $k-1$.  Owing to the popularity of $\tanh$ as an activation function in neural networks \cite{aggarwal} and the use of splines for the same \cite{unser_deep,unser_representer}, it's possible these splines have application in their own right.  However in this note, we will concern ourselves with basic approximation properties of these splines.  In principal, interpolating with either spline of order $k$ corresponds to $2k-1$ degree \emph{polynomial} spline interpolation. We will explore the problem for the values $k=1$ and $k=2$, which correspond to classical linear and cubic interpolation, respectively.

The rest of the paper is laid out as follows.  Section 2 contains definitions and facts necessary to the sequel.  Section 3 contains results for first order splines, while Section 4 contains those related to the second order splines.  The main results are found in these sections as Proposition \ref{k=1 convergence} and Theorem \ref{k=2 convergence} and its corollary.  $B$-spline bases for low degree splines are developed in Section 5.  A related interpolation problem for higher degree splines is solved using a Fourier analysis approach in Section 6, whose main result is Theorem \ref{thm: high degree main}.


\section{Definitions and Basic Facts}
Throughout the sequel, we use standard notations for derivatives.  For example, $D^2u$ and $u''$ both correspond to the second derivative of the function $u$.  The set of polynomials of degree at most $k$ is denoted $\Pi_k$.  By a \emph{partition} of the interval $[a,b]$, we mean a sequence of increasing values $X=(x_j: 0\leq j\leq N)$ such that $a=x_0<\cdots<x_N=b$.  Associated to a fixed $X$, we have $h_j:=x_{j}-x_{j-1}$ and $\overline{h}(X):=\displaystyle\max_{1\leq j\leq N}h_j$.

We denote the set of all $k$-th order polyhyperbolic splines by $S_{\alpha}^k(X)$, that is
\[
S_{\alpha}^k(X):=\{ s\in C^{2k-2}[a,b]: (D^2-\alpha^2)^k s =0 \text{ on } [a,b]\setminus X \}.
\] 
We denote by $T_\alpha^k(X)$, the collection 
\[
T_{\alpha}^k(X) := \{ \sech(\alpha \cdot)u: u\in S_{\alpha}^k(X) \}.
\]
Note that on each interval $[x_{j},x_{j+1}]$, $t_\alpha \in T_{\alpha}^{k}(X)$ takes the form
\[
t_\alpha(x) = p(x)+ q(x)\tanh(\alpha x),
\]
where $p,q\in\Pi_{k-1}$.

We we use the notation $\| u \|_{L^\infty}$ to denote the usual $L^\infty$ norm for a function $u$, while we use the notation $\| M \|_{\infty}$ to mean the maximum row sum of the matrix $M$.

A \emph{diagonally dominant} matrix is an $N \times N$ square matrix $[a_{ij}]$ which satisfies $|a_{ii}|-\sum_{j\neq i}|a_{ij}|>0$ for each $1\leq i \leq N$.  The well known Levy-Desplanques theorem proves this condition implies the invertibility of $[a_{ij}]$, see \cite{Taussky}.

Generally we will seek to interpolate the data $Y=(y_j:0 \leq j\leq N)$ on a fixed, but otherwise arbitrary, partition $X$ using $s_\alpha\in S_{\alpha}^k(X)$ or $t_\alpha\in T_{\alpha}^k(X)$.  When we want to emphasize the dependence on the data set $Y$, we write $s_\alpha[Y]$ or $t_\alpha[Y]$.  Finally, we note the value of the constant $C$ will be depend on its occurrence and is typically independent of the listed parameters unless stated otherwise.  


\section{Properties of first order splines}
As one may expect, things are easier when $k=1$.  We include these results as more or less a warm up for the more involved $k=2$ case.  When $k=1$, we have $s_\alpha=\sum_{j=1}^{N}s_j$, where $s_j:[x_{j-1},x_{j}]\to\mathbb{R}$ is given by $s_j(x)=a_j\cosh(\alpha x)+b_j\sinh(\alpha x)$.  Thus we need only solve the $2\times 2$ matrix equation
\[
\begin{bmatrix} \cosh(\alpha x_{j-1}) & \sinh(\alpha x_{j-1})\\ 
\cosh(\alpha x_j) & \sinh(\alpha x_j)
\end{bmatrix}\begin{bmatrix}
    a_j\\b_j
\end{bmatrix} = \begin{bmatrix}
y_{j-1} \\y_{j}
\end{bmatrix}.
\]
The determinant of the matrix is
\[
\cosh(\alpha x_{j-1})\sinh(\alpha x_{j})   -  \cosh(\alpha x_{j})\sinh(\alpha x_{j-1}) = \sinh(\alpha h_j).
\]
Since $h_j>0$, this system has a unique solution which after simplification yields
\begin{equation}\label{linear solution}
s_j(x)= \dfrac{\sinh(\alpha(x_{j}-x))}{\sinh(\alpha h_j)}y_{j-1}+\dfrac{\sinh(\alpha(x- x_{j-1}))}{\sinh(\alpha h_j)}y_j.
\end{equation}
A similar computation yields
\begin{equation}\label{linear t solution}
t_{j}(x) = \dfrac{\tanh(\alpha x_{j})-\tanh(\alpha x)}{\tanh(\alpha x_{j})-\tanh(\alpha x_{j-1})}y_{j-1}+\dfrac{\tanh(\alpha x)-\tanh(\alpha x_{j-1})}{\tanh(\alpha x_{j})-\tanh(\alpha x_{j-1})}y_{j}.
\end{equation}
Expanding \eqref{linear solution} and \eqref{linear t solution} in a Taylor series (in $\alpha$) yields
\begin{align*}
s_\alpha\big|_{[x_{j-1},x_j]}(x)&=y_{j-1}+\dfrac{y_{j}-y_{j-1}}{h_j}(x-x_{j-1})+O(\alpha^2) \text{ and}\\
t_\alpha\big|_{[x_{j-1},x_j]}(x)&=y_{j-1}+\dfrac{y_{j}-y_{j-1}}{h_j}(x-x_{j-1})+O(\alpha^2) .
\end{align*}
We recognize the first two terms as the linear interpolant of $(x_{j-1},y_{j-1})$ and $(x_j,y_j)$.  Thus we expect $s_\alpha$ to exhibit similar convergence properties to the linear spline $l[Y]$.  The expansions above give us $s_0[Y]=t_0[Y]=l[Y]$.  
We may also estimate the rate of convergence by expanding a sufficiently smooth function $f$ in a Taylor series about $x=x_{j-1}$, then on $(x_{j-1},x_j)$ we have
\begin{align*}
f(x)-t_\alpha(x)&=\\ & \left(f'(x_{j-1}) - \alpha h_j(\coth(\alpha h_j)+\tanh(\alpha x_j))f[x_{j-1},x_j] \right)(x-x_{j-1})\\
&\quad+O(\overline{h}^2)\\
&= (f'(x_{j-1})-f[x_{j-1},x_j])(x-x_{j-1})+O(\overline{h}^2)\\
&=O(\overline{h}^2).
\end{align*}
The last equation requires $f\in C^2[a,b]$, and follows from the Mean Value Theorem.  A completely analogous result holds for $s_\alpha$.  We summarize these findings in the following propositions.

\begin{prop}\label{k=1 uniqueness}
Let $X$ be a partition of $[a,b]$, for any $\alpha>0$ and data set $Y$, the interpolants $s_\alpha[Y]$ and $t_\alpha[Y]$ are unique.  Furthermore they satisfy
\[
\lim_{\alpha\to 0}s_\alpha[Y](x)=l[Y](x) \quad \text{ and } \quad
\lim_{\alpha\to 0}t_\alpha[Y](x)=l[Y](x)
\]
for all $x\in [a,b]$, where $l[Y]$ is the linear spline interpolant.
\end{prop}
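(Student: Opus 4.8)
The plan is to treat the two assertions—uniqueness and the limit—separately, leaning throughout on the explicit per-interval formulas already derived above rather than on any global matrix machinery.

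For uniqueness I would first observe that when $k=1$ the defining smoothness requirement is merely $s_\alpha\in C^0[a,b]$, so there are no derivative constraints coupling adjacent pieces beyond agreement at the shared nodes. Because the interpolation conditions $s_j(x_{j-1})=y_{j-1}$ and $s_j(x_j)=y_j$ force consecutive pieces to take the same value at each interior node, continuity is automatic and every piece $s_j$ may be solved for independently. On $[x_{j-1},x_j]$ the two interpolation conditions constitute a $2\times2$ linear system whose determinant was computed to be $\sinh(\alpha h_j)$; since $\alpha>0$ and $h_j>0$ this is strictly positive, so the system is nonsingular and $s_j$ is unique. The argument for $t_\alpha$ is identical, the relevant determinant now being $\tanh(\alpha x_j)-\tanh(\alpha x_{j-1})$, which is nonzero because $\tanh$ is strictly increasing on $\mathbb{R}$ and $x_{j-1}<x_j$. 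Assembling the uniquely determined pieces yields a unique global interpolant in each case.

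For the limit I would pass to $\alpha\to 0$ directly in the closed forms \eqref{linear solution} and \eqref{linear t solution}. Using $\sinh(\alpha u)/\sinh(\alpha h_j)\to u/h_j$, and the analogous $\tanh(\alpha u)\sim\alpha u$ behaviour for $t_\alpha$, each coefficient converges to the corresponding barycentric weight $\tfrac{x_j-x}{h_j}$ or $\tfrac{x-x_{j-1}}{h_j}$. Hence on each $[x_{j-1},x_j]$ we obtain
\[
\lim_{\alpha\to0}s_j(x)=\frac{x_j-x}{h_j}\,y_{j-1}+\frac{x-x_{j-1}}{h_j}\,y_j=l[Y](x),
\]
and identically for $t_j$; this is simply the reading of the Taylor expansions already recorded, in which the $O(\alpha^2)$ remainders vanish in the limit. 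As the partition is finite, pointwise convergence on each subinterval delivers convergence for all $x\in[a,b]$.

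I do not anticipate a substantial obstacle: the proposition is essentially bookkeeping over the explicit formulas. The only points needing a word of care are verifying that the $t_\alpha$ determinant is genuinely nonzero—which follows from strict monotonicity of $\tanh$ and holds even when the nodes are negative—and noting that the ratios defining the limits are well defined for every $\alpha>0$, since the denominators $\sinh(\alpha h_j)$ and $\tanh(\alpha x_j)-\tanh(\alpha x_{j-1})$ never vanish; the indeterminate forms at $\alpha=0$ may then be resolved termwise by L'H\^opital's rule or the series expansion without difficulty.
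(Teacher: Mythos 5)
Your proposal is correct and follows essentially the same route as the paper: uniqueness via the decoupled $2\times 2$ systems on each subinterval with determinants $\sinh(\alpha h_j)>0$ and $\tanh(\alpha x_j)-\tanh(\alpha x_{j-1})>0$, and the limit by reading off the small-$\alpha$ behaviour of the closed forms \eqref{linear solution} and \eqref{linear t solution}, which is exactly the Taylor-expansion argument the paper records. Your explicit verification of the $\tanh$ determinant (which the paper leaves as ``a similar computation'') and the remark that $C^{2k-2}=C^0$ decouples the pieces are fine points of care, not deviations.
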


\begin{prop}\label{k=1 convergence}
Suppose that $f\in C^2[a,b]$ and $X$ is a partition of $[a,b]$.  If $t_\alpha\in T_{\alpha}^1(X)$ such that $t_\alpha\mid_{X} =f\mid_{X}$, then for $\alpha$ sufficiently small, we have
\[
\| f- t_\alpha  \|_{L^{\infty}[a,b]} \leq C \overline{h}^2,
\]
where $C>0$ depends on $f$ and $\alpha$.  The same is true if $t_\alpha$ is replaced by $s_\alpha\in S_{\alpha}^1(X)$.
\end{prop}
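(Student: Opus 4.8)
The plan is to localize the error to a single subinterval $[x_{j-1},x_j]$, use the explicit interpolant formulas \eqref{linear solution} and \eqref{linear t solution} to expand the interpolant about the left endpoint, compare term-by-term with the Taylor expansion of $f$, and then pass to the supremum over $[a,b]$. Since the two families are handled identically, I would fix $\alpha>0$, write out the argument for $t_\alpha$, and then indicate the trivial modifications for $s_\alpha$. First I would Taylor expand $f$ about $x_{j-1}$ with Lagrange remainder, so that $f(x)=f(x_{j-1})+f'(x_{j-1})(x-x_{j-1})+\tfrac12 f''(\eta)(x-x_{j-1})^2$ with the remainder bounded by $\tfrac12\|f''\|_{L^\infty}\overline h^{\,2}$ uniformly in $x\in[x_{j-1},x_j]$ and in $j$.

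Next I would expand $t_\alpha$ about $x_{j-1}$. Because $t_\alpha$ interpolates $f$ at the nodes, its constant term is $f(x_{j-1})$ and cancels the constant term of $f$, so the error is governed by the first-order term $\bigl(f'(x_{j-1})-t_\alpha'(x_{j-1})\bigr)(x-x_{j-1})$ plus an $O(\overline h^{\,2})$ remainder. Differentiating \eqref{linear t solution} and using $\tanh(\alpha x_j)-\tanh(\alpha x_{j-1})=\sinh(\alpha h_j)/\bigl(\cosh(\alpha x_j)\cosh(\alpha x_{j-1})\bigr)$ together with the addition formula for $\cosh$, this derivative simplifies to $t_\alpha'(x_{j-1})=\alpha h_j\bigl(\coth(\alpha h_j)+\tanh(\alpha x_{j-1})\bigr)f[x_{j-1},x_j]$, reproducing the coefficient that appears in the displayed computation preceding the statement.

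The crux is then to show the coefficient $f'(x_{j-1})-t_\alpha'(x_{j-1})$ is only $O(\overline h)$, so that, multiplied against a factor $(x-x_{j-1})$ of size at most $\overline h$, it contributes at order $\overline h^{\,2}$. For fixed $\alpha$ the expansion $\alpha h_j\coth(\alpha h_j)=1+O\bigl((\alpha h_j)^2\bigr)$ and the bound $|\tanh|\le 1$ give $\alpha h_j\bigl(\coth(\alpha h_j)+\tanh(\alpha x_{j-1})\bigr)=1+O(\overline h)$, whence $t_\alpha'(x_{j-1})=f[x_{j-1},x_j]+O(\overline h)$, using $|f[x_{j-1},x_j]|\le\|f'\|_{L^\infty}$. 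The Mean Value Theorem applied first to $f$ and then to $f'$ gives $f'(x_{j-1})-f[x_{j-1},x_j]=O(\overline h)$; combining the two estimates yields $f'(x_{j-1})-t_\alpha'(x_{j-1})=O(\overline h)$ with implied constant depending only on $f$ and $\alpha$. Assembling the pieces gives $|f(x)-t_\alpha(x)|\le C\overline h^{\,2}$ on each subinterval, and taking the supremum over $[a,b]$ finishes the estimate; the $s_\alpha$ case runs verbatim once one checks from \eqref{linear solution} that $s_\alpha'(x_{j-1})=f[x_{j-1},x_j]+O(\overline h)$.

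I expect the main obstacle to be organizational rather than conceptual: I must ensure the implied constants in the various $O(\overline h)$ and $O(\overline h^{\,2})$ terms are independent of $j$, so that they survive the passage to the supremum, and I must track that the requirement that $\alpha$ (equivalently the product $\alpha\overline h$) be small is precisely what makes the relation $\alpha h_j\coth(\alpha h_j)\to1$ usable across all subintervals simultaneously. The one genuinely computational step is the simplification of $t_\alpha'(x_{j-1})$ and its $s_\alpha$ analogue, where a stray factor of $\cosh$ or a sign is the most likely place to err.
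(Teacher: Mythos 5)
Your proposal is correct and takes essentially the same route as the paper: expand $f$ in a Taylor series about $x_{j-1}$, cancel the constant term via the interpolation condition, identify the slope of the interpolant as $\alpha h_j\left(\coth(\alpha h_j)+\tanh(\alpha x_{j-1})\right)f[x_{j-1},x_j]=f[x_{j-1},x_j]+O(\overline{h})$, and use the Mean Value Theorem for $f'(x_{j-1})-f[x_{j-1},x_j]=O(\overline{h})$, exactly as in the displayed computation preceding the proposition (the paper's display shows $\tanh(\alpha x_j)$ where you have $\tanh(\alpha x_{j-1})$; the discrepancy is $O(\alpha\overline{h})$ and immaterial at this order, and your version is the literal derivative at the left endpoint). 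Your verbatim treatment of $s_\alpha$ likewise matches the paper's ``completely analogous'' claim, though the paper's remark also offers a shortcut there via the factorization $f=\cosh(\alpha\cdot)g$.
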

\begin{remark*}
Rather than expanding $s_\alpha$ in a Taylor series, we could use the fact that for any $f\in C^2[a,b]$, there exists $g\in C^2[a,b]$ with $f(x)=\cosh(\alpha x)g(x)$.  Thus
\[
\| f - s_\alpha  \|_{L^\infty [a,b]} = \| \cosh(\alpha \cdot) \left( g- t_\alpha \right)  \|_{L^{\infty}[a,b]} \leq C\overline{h}^2
\]
for sufficiently small $\alpha$.
\end{remark*}


\section{Properties of second order splines}
The $k=2$ problem is more involved.  Just as in the case with cubic splines, we must impose endpoint conditions on our splines $s_{\alpha} \in S_{\alpha}^2(X)$ (or $t_{\alpha} \in T_{\alpha}^2(X)$).  For these, we use those found in \cite{Pruess}:
\begin{align*}
\text{Type I: }&  s_\alpha '(a) = y'_0 , s_\alpha '(b)=y'_N ,\\
\text{Type II: }&  s_\alpha ''(a)=s_\alpha ''(b)=0, \\
\text{Type III: }&   s_\alpha ''(a)=y''_0 , s_\alpha ''(b)=y''_N.
\end{align*}
Assuming that we are sampling a function $f:[a,b]\to\mathbb{R}$, so that $y_j=f(x_j)$, conditions $I$ and $III$ become
\begin{align*}
\text{Type I: }&  s_\alpha '(a) = f'(a) , s_\alpha '(b)=f'(b) ,\\
\text{Type III: }&   s_\alpha ''(a)=f''(a) , s_\alpha ''(b)=f''(b).
\end{align*}

In light of the remark that follows Proposition \ref{k=1 convergence}, we find it more convenient to work with $t_\alpha \in T_{\alpha}^2(X)$ in what follows, often suppressing the dependence on $\alpha$.  Note that since $t_\alpha\mid_{[x_{j-1},x_j]}(x)=p_j(x)+q_j(x)\tanh(\alpha x)$, where $p_j,q_j\in \Pi_1$, $t''_\alpha\mid_{[x_{j-1},j]} = D^2[q_j\tanh(\alpha \cdot)]$.  Specifying $t_\alpha''(x_{j-1})$ and $t_{\alpha}''(x_j))$ allow us to solve for the coefficients of $q_j$ then  integrating twice and using the interpolation conditions allow us to solve for the coefficients of $p_j$. One may then generate a tridiagonal system to enforce smoothness of the first derivative, setting $t''_j:=t''_\alpha(x_j)$ we have
\begin{subequations}\label{tridiag}
\begin{align}
\label{tridiag 1}    b_0t''_0+c_0 t''_{1} &= d_0,\\
 \label{tridiag 2}   a_jt_{j-1}''+ b_jt''_j+c_jt''_{j+1} &= d_j; \quad 1\leq j\leq N-1,\\
 \label{tridiag 3}    a_{N}t''_{N-1}+b_{N}t''_{N} &= d_N.
\end{align}
\end{subequations}
One needs to evaluate $t'\mid_{[x_{j-1},x_j]}$ at both endpoints to generate the coefficients in \eqref{tridiag 2}.  Setting
\[
E_{j-1} = \dfrac{\alpha h_{j} \cosh(\alpha h_{j})-\sinh(\alpha h_{j})  }{2\alpha^2 h_{j} (\tanh(\alpha x_j)-\tanh(\alpha x_{j-1})-\alpha h_j\tanh(\alpha x_{j-1})\tanh(\alpha x_j))},
\]
we have for $1\leq j \leq N-1$
\begin{align*}
    a_{j} &=\dfrac{\cosh(\alpha x_{j-1})}{\cosh(\alpha x_j)}E_{j-1}=\dfrac{h_j}{6}+O\left((\alpha \overline{h})^2\right); \alpha\to 0,\\
    c_{j} &=\dfrac{\cosh(\alpha x_{j+1})}{\cosh(\alpha x_j)}E_{j}=\dfrac{h_{j+1}}{6}+O\left((\alpha \overline{h})^2\right); \alpha\to 0, \text{ and}\\
    d_{j} &= \dfrac{y_{j+1}-y_j}{h_{j+1}} - \dfrac{y_{j}-y_{j-1}}{h_j}.
\end{align*}
The term $b_j$ is more complicated, setting
\[
F_{j-1} = 4\alpha^2 h_{j}(\tanh(\alpha x_j)-\tanh(\alpha x_{j-1})-\alpha h_j\tanh(\alpha x_{j-1})\tanh(\alpha x_j)),
\]
which is twice the denominator of $E_{j-1}$, we can simplify the expression somewhat
\begin{align*}
b_{j} =&\left( \sinh(2\alpha x_j)-2\alpha h_j - 2\tanh(\alpha x_{j-1})(\cosh^2(\alpha x_j)-\alpha^2 h_{j}^2)  \right) / F_{j-1}\\
&-\left( \sinh(2\alpha x_j)+2\alpha h_{j+1} - 2\tanh(\alpha x_{j+1})(\cosh^2(\alpha x_j)-\alpha^2 h_{j+1}^2)  \right) / F_{j}\\
=& \dfrac{h_{j}+h_{j+1}}{3} +O\left((\alpha \overline{h})^2\right); \alpha\to 0.
\end{align*}

For type $I$ endpoint conditions, the coefficients in \eqref{tridiag 1} and \eqref{tridiag 3} are given by
\begin{align*}
b_0 &= -\left( \sinh(2\alpha x_0) +2\alpha h_1 - 2\tanh(\alpha x_{1})(\cosh^2(\alpha x_0)-\alpha^2 h_{1}^2)  \right)/F_{0}  \\
&=\dfrac{h_1}{3}+O\left((\alpha \overline{h})^2\right); \alpha\to 0,\\
c_0 &= \dfrac{\cosh(\alpha x_1)}{\cosh(\alpha x_0)}E_0 = \dfrac{h_1}{6}+O\left((\alpha \overline{h})^2\right);\alpha\to 0, \\
d_0&= \dfrac{y_1-y_0}{h_1}-y'_0,\\
a_N &=  \dfrac{\cosh(\alpha x_{N-1})}{\cosh(\alpha x_N)}E_{N-1}= \dfrac{h_N}{6}+O\left((\alpha \overline{h})^2\right);\alpha\to 0, \\
b_{N} &= \left( \sinh(2\alpha x_N)-2\alpha h_N - 2\tanh(\alpha x_{N-1})(\cosh^2(\alpha x_N)-\alpha^2 h_{N}^2)  \right)  /F_{N-1} \\
&= \dfrac{h_N}{3}+O\left((\alpha \overline{h})^2\right);\alpha\to 0, \text{ and}\\
d_{N} &=  y'_N - \dfrac{y_N-y_{N-1}}{h_N}.
\end{align*}

The adjustments for the other types are 
\begin{align*}
II: & b_0=b_N=1 , a_0=c_0=0, d_0=d_N=0\\
III: & b_0=b_N=1 , a_0=c_0=0, d_0=y''_0 , d_N=y''_N.
\end{align*}

Direct computation now provides the following analog of Proposition \ref{k=1 uniqueness}.

\begin{thm}\label{k=2 uniqueness}
Let $X$ be a partition of $[a,b]$.  For sufficiently small $\alpha$ and data set $Y$ with end condition type $I,II,$ or $III$, the interpolants $s_\alpha[Y] \in S_{\alpha}^2(X)$ and $t_\alpha[Y]\in T_{\alpha}^2(X)$ are unique.  Furthermore,
\[
\lim_{\alpha\to 0}s_\alpha[Y](x)=\sigma[Y](x) \quad\text{ and }\quad
\lim_{\alpha\to 0}t_\alpha[Y](x)=\sigma[Y](x)
\]
for all $x\in [a,b]$, where $\sigma[Y]$ is the cubic spline interpolant with the same type of endpoint condition.
\end{thm}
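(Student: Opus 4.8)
The plan is to read both assertions off of the tridiagonal system \eqref{tridiag}, taking the vector of nodal second derivatives $(t''_0,\dots,t''_N)$ as the unknown and exploiting the asymptotic expansions already recorded for its coefficients. Write $M_\alpha$ for the $(N+1)\times(N+1)$ coefficient matrix of \eqref{tridiag} and $\mathbf{d}$ for its right-hand side. For uniqueness it suffices to show $M_\alpha$ is invertible, and I would do this by verifying that $M_\alpha$ is diagonally dominant for all sufficiently small $\alpha$ and then invoking the Levy-Desplanques theorem. On an interior row the relevant margin is
\[
|b_j|-|a_j|-|c_j|=\frac{h_j+h_{j+1}}{3}-\frac{h_j}{6}-\frac{h_{j+1}}{6}+O\!\left((\alpha\overline{h})^2\right)=\frac{h_j+h_{j+1}}{6}+O\!\left((\alpha\overline{h})^2\right),
\]
which is bounded below by a positive constant (depending only on $X$) once $\alpha$ is small enough that the error term is dominated by $(h_j+h_{j+1})/6$. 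The endpoint rows are handled identically, using $|b_0|-|c_0|=h_1/6+O((\alpha\overline{h})^2)>0$ for Type I and the trivial rows $b_0=b_N=1$, $a_0=c_0=0$ for Types II and III. Hence $M_\alpha$ is invertible for small $\alpha$ and the interpolant $t_\alpha[Y]$ is unique.

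For the limit I would observe that every coefficient of $M_\alpha$ and every entry of $\mathbf{d}$ converges as $\alpha\to 0$: the expansions give $a_j\to h_j/6$, $b_j\to(h_j+h_{j+1})/3$, $c_j\to h_{j+1}/6$, while $d_j$ is independent of $\alpha$, with analogous limits for the boundary rows under each endpoint type. The limiting matrix $M_0$ is exactly the coefficient matrix of the classical moment equations for the cubic spline interpolant $\sigma[Y]$ with the matching endpoint condition, and $\mathbf{d}$ is the corresponding right-hand side. Since $M_0$ is itself diagonally dominant, hence invertible, matrix inversion is continuous at $M_0$; combined with $M_\alpha\to M_0$ this yields $M_\alpha^{-1}\to M_0^{-1}$ and therefore $t''_\alpha(x_j)\to\sigma''(x_j)$ for every $j$.

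It remains to promote convergence of the nodal second derivatives to pointwise convergence of the splines. On each $[x_{j-1},x_j]$ the spline $t_\alpha$ is recovered from the two values $t''_\alpha(x_{j-1})$ and $t''_\alpha(x_j)$ by solving for the coefficients of $q_j$ and then integrating twice and imposing $t_\alpha(x_{j-1})=y_{j-1}$, $t_\alpha(x_j)=y_j$, exactly as described before \eqref{tridiag}. Each of these operations depends continuously on $\alpha$ and on the pair of nodal second derivatives, and in the limit $\alpha\to 0$ the resulting representation collapses to the standard piecewise-cubic formula expressing $\sigma$ through its moments. Passing to the limit thus gives $t_\alpha[Y](x)\to\sigma[Y](x)$ for every $x\in[a,b]$. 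For the polyhyperbolic family I would either run the identical argument with the analogous tridiagonal system for $s_\alpha$, whose coefficients admit the same $\alpha\to 0$ limits to leading order, or appeal to the device of the remark following Proposition \ref{k=1 convergence}, relating $s_\alpha$ and $t_\alpha$ through multiplication by $\cosh(\alpha\cdot)$ and using $\cosh(\alpha\cdot)\to 1$ uniformly on $[a,b]$.

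The main obstacle is the bookkeeping in the third step: one must verify that the explicit interval representation of $t_\alpha$ in terms of $(t''_{j-1},t''_j)$ genuinely degenerates to the cubic moment formula, which requires expanding the $\tanh$ and $\sech$ factors and checking that the leading terms reproduce $(x_j-x)^3/6h_j$ and $(x-x_{j-1})^3/6h_j$ together with the correct linear part. By contrast, uniqueness and the convergence of the moments themselves are immediate once the diagonal-dominance estimate and the continuity of matrix inversion are in place.
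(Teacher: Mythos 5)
Your proposal is correct and follows essentially the same route as the paper: the paper's (very terse) proof likewise establishes uniqueness of $t_\alpha[Y]$ via diagonal dominance of the system \eqref{tridiag} for small $\alpha$, handles $s_\alpha[Y]$ through the correspondence $s_\alpha=\cosh(\alpha\cdot)\tilde{t}_\alpha$ with the rescaled data $\widetilde{Y}=(\sech(\alpha x_j)y_j)$, and leaves the $\alpha\to 0$ limit as the ``direct computation'' from the coefficient expansions that you carry out explicitly via $M_\alpha\to M_0$ and continuity of matrix inversion. Your third step (checking that the interval representation in terms of $(t''_{j-1},t''_j)$ degenerates to the cubic moment formula) is precisely the bookkeeping the paper suppresses, so you have filled in rather than diverged from its argument.
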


\begin{proof}
The matrix \eqref{tridiag} for $t_\alpha \in T_{\alpha}^{2}(X)$ is diagonally dominant for sufficiently small $\alpha$. Now consider $\widetilde{Y}=(\sech(\alpha x_j)y_j:0\leq j\leq N)$, we have a unique $\tilde{t}_\alpha\in T_{\alpha}^2(X)$ which interpolates $\widetilde{Y}$ and $s_\alpha = \cosh(\alpha \cdot)t_\alpha \in S_{\alpha}^2 (X)$ interpolates $Y$.
\end{proof}

We use an argument similar to the one given in \cite{Pruess} to prove a result similar to Proposition \ref{k=1 convergence}.  We begin by setting $\delta=\sigma - t_\alpha$ and $\delta_{j}'' = \delta''(x_j)$.  We use \eqref{tridiag} and the tridiagonal system for $\sigma$ to generate a system for $\delta''$:
\begin{subequations}\label{delta tridiag}
\begin{align}
\label{delta tridiag 1}    &\delta''_0+\frac{c_0}{b_0} \delta''_{1} = \frac{3b_0-h_1}{3b_0}\sigma''(x_0)+\frac{6c_0-h_1}{6b_0}\sigma''(x_1),\\
 \label{delta tridiag 2} \begin{split}   &\frac{a_j}{b_j}\delta''_{j-1}+\delta''_j+\frac{c_j}{b_j}\delta''_{j+1} \\
 &=\frac{6a_j-h_j}{6b_j}\sigma''(x_{j-1})+\frac{3b_j-h_j-h_{j+1}}{3b_j}\sigma''(x_{j})+ \frac{6c_j-h_{j+1}}{6b_j}\sigma''(x_{j+1}),\end{split}\\
 \label{delta tridiag 3} &   \frac{a_{N}}{b_N}\delta_{N-1}''+\delta_{N}'' = \frac{6a_N-h_N}{6b_N}\sigma''(x_{N-1})+\frac{3b_N-h_N}{3b_N}\sigma''(x_{N}).
\end{align}
\end{subequations}

Now we may write this as the matrix equation $(I+M)\delta''= b$, where $\delta''$ represents the vector with components $\delta_j''$ and $b$ is the vector version of the right hand side \eqref{delta tridiag}.  More computation with the coefficients in \eqref{tridiag} shows that $\| M\|_{\infty}=O(\frac{1}{2})$ as $\alpha\to 0$, hence for sufficiently small $\alpha$
\[
\| \delta'' \|_{\infty}=\| (I+M)^{-1}b  \|_{\infty} \leq 3 \| b \|_{\infty}.
\]
To estimate $\| b\|_{\infty}$ we note the Mean Value theorem provides the estimate for the first and last rows $O\left((\alpha \overline{h})^2\right)\| \sigma'''  \|_{L^{\infty}([a,b]\setminus X)}$, while the the triangle inequality provides the estimate for the middle rows $O\left((\alpha \overline{h})^2\right)\| \sigma'' \|_{L^{\infty}(X)}$ as $\alpha\to 0$.  Thus we have for sufficiently small $\alpha$
\begin{equation}\label{delta inequality}
\| \delta'' \|_{\infty}\leq  C(\alpha\overline{h})^2 \max\{\| \sigma'''  \|_{L^{\infty}([a,b]\setminus X)},\| \sigma''  \|_{L^{\infty}(X)}\}.
\end{equation}

Now the argument follows a similarly to Proposition \ref{k=1 convergence}.  We may use Rolle's theorem for the function $\delta$, which allows us to find $\xi_j\in(x_{j-1},x_{j})$ such that $\delta'(\xi_j)=0$, this together with the fact that $\delta(x_j)=0$  allow us to write
\[
\delta\big|_{[x_{j-1},x_{j}]}(x)=\int_{x_j}^x \delta'(u){\rm d}u \quad \text{ and } \quad \delta'\big|_{[x_{j-1},x_{j}]}(x)=\int_{\xi_j}^x \delta''(u){\rm d}u. 
\]
We need only estimate $\|\delta''  \|_{L^\infty[a,b]}$.  To this end, note that we can write things in terms the values at the partition
\begin{align*}
\delta''(x) &= \sigma''(x)-t_{\alpha}''(x)\\
&=w_{1}(x)\delta''_{j-1} +w_{2}(x)\delta''_{j}+z_{1}(x)\sigma''_{j-1} +z_{2}(x)\sigma''_{j},
\end{align*}
where
\begin{align*}
w_{1}(x) &= \dfrac{4\alpha^2 h_{j}^2\cosh(\alpha x_{j-1})((-1+\alpha(x_{j}-x)\tanh(\alpha x_{j}))\tanh(\alpha x)+\tanh(\alpha x_{j}))}{\cosh^2(\alpha x)F_{j-1}}, \\
z_{1}(x) &= \dfrac{x_j-x}{h_j} -w_{1}(x)= O\left( (\alpha \overline{h} )^2 \right) \| \sigma'''  \|_{L^{\infty}([a,b]\setminus X)};\alpha\to 0\\
w_{2}(x)&=\dfrac{4\alpha^2 h_{j}^2\cosh(\alpha x_j)((1+\alpha(x-x_{j-1})\tanh(\alpha x_{j-1}))\tanh(\alpha x)-\tanh(\alpha x_{j-1}))}{\cosh^2(\alpha x)F_{j-1}}\\
z_{2}(x) &= \dfrac{x-x_{j-1}}{h_j} -w_{2}(x).
\end{align*}
Thus
\[
\| \delta'' \|_{L^{\infty}[a,b]}= O\left( (\alpha \overline{h} )^2 \right) \max\left\{ \| \sigma'''  \|_{L^{\infty}([a,b]\setminus X)},\| \sigma''  \|_{L^{\infty}(X)}    \right\};\alpha\to 0.
\]
Putting these estimates together yields the following theorem.

\begin{thm}\label{k=2 convergence}
    Let $X$ be a partition for $[a,b]$.  Given a data set $Y$, suppose that $t_\alpha[Y]\in T_{\alpha}^{2}(X)$ interpolates $Y$ with type I (II, III) end conditions.  For $i=0,1,$ or $2$ and sufficiently small $\alpha$,
    \[
    \| D^i(\sigma[Y]-t_\alpha[Y])  \|_{L^{\infty}[a,b]}\leq C (\alpha \overline{h})^{4-i}\max\left\{ \| \sigma'''  \|_{L^{\infty}([a,b]\setminus X)},\| \sigma''  \|_{L^{\infty}(X)}    \right\},
    \]
    where $\sigma[Y]$ is the cubic spline interpolant with type I (II, III) end conditions and $C>0$ is a constant independent of $\overline{h}$.
\end{thm}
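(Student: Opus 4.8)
The plan is to reduce the estimate for all three derivatives to a single bound on the second derivative of the error $\delta=\sigma-t_\alpha$, and to extract the latter from the perturbed tridiagonal system already assembled in \eqref{delta tridiag}. Since $\delta$ vanishes at every node (both $\sigma$ and $t_\alpha$ interpolate $Y$), once $\|\delta''\|_{L^\infty[a,b]}$ is controlled the cases $i=1$ and $i=0$ follow from the representations $\delta'(x)=\int_{\xi_j}^x\delta''$ and $\delta(x)=\int_{x_j}^x\delta'$, where $\xi_j\in(x_{j-1},x_j)$ is the Rolle point with $\delta'(\xi_j)=0$. Each such integration is over an interval of length at most $\overline h$, so it contributes one factor of $\overline h$ and lowers the order accordingly (in particular $i=2$ returns exactly $(\alpha\overline h)^2$, and the lower derivatives inherit the stated orders from the additional mesh factors).

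For the second derivative I would proceed in two stages. First I bound the nodal values $\delta_j''$ by casting \eqref{delta tridiag} as $(I+M)\delta''=b$ and invoking uniform invertibility of $I+M$: the coefficient asymptotics $a_j,c_j=\tfrac{h}{6}+O((\alpha\overline h)^2)$ and $b_j=\tfrac{h_j+h_{j+1}}{3}+O((\alpha\overline h)^2)$ show that $a_j/b_j$ and $c_j/b_j$ approach the classical cubic ratios, whence $\|M\|_\infty\le\tfrac12+o(1)$ and $\|(I+M)^{-1}\|_\infty\le 3$ for $\alpha$ small. Estimating $\|b\|_\infty$ then gives \eqref{delta inequality}: the interior rows are handled by the triangle inequality applied to the coefficients $\tfrac{6a_j-h_j}{6b_j}$ and their companions, each of which is $O((\alpha\overline h)^2)$, while the end rows for type I are handled by the Mean Value Theorem applied to $\sigma'''$, and for types II and III the end equations force $\delta_0''=\delta_N''=0$ outright. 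This yields the nodal estimate $\|\delta''\|_\infty=O((\alpha\overline h)^2)\max\{\|\sigma'''\|_{L^\infty([a,b]\setminus X)},\|\sigma''\|_{L^\infty(X)}\}$.

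Second, I upgrade this to $\|\delta''\|_{L^\infty[a,b]}$ through the interval representation $\delta''(x)=w_1(x)\delta_{j-1}''+w_2(x)\delta_j''+z_1(x)\sigma_{j-1}''+z_2(x)\sigma_j''$. The point is that $w_1,w_2$ converge to the cubic-spline weights $\tfrac{x_j-x}{h_j},\tfrac{x-x_{j-1}}{h_j}$ (an asymptotic partition of unity, so $|w_1|+|w_2|\le 1+o(1)$), while the defect weights satisfy $z_1,z_2=O((\alpha\overline h)^2)$. Passing the nodal bound through the $w$-terms and writing $z_1\sigma_{j-1}''+z_2\sigma_j''=z_1(\sigma_{j-1}''-\sigma_j'')+(z_1+z_2)\sigma_j''$, so that the Mean Value Theorem converts $\sigma_{j-1}''-\sigma_j''$ into a factor $\overline h\,\|\sigma'''\|$, produces $\|\delta''\|_{L^\infty[a,b]}=O((\alpha\overline h)^2)\max\{\|\sigma'''\|_{L^\infty([a,b]\setminus X)},\|\sigma''\|_{L^\infty(X)}\}$. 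Feeding this into the two integrations of the first paragraph settles all three cases, and the $s_\alpha$ statement follows from $s_\alpha=\cosh(\alpha\cdot)t_\alpha$ and the boundedness of $\cosh(\alpha\cdot)$ on $[a,b]$, exactly as in the remark after Proposition \ref{k=1 convergence}.

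The main obstacle lies entirely in the asymptotics: everything rests on showing that the $O(1)$ parts of $a_j,b_j,c_j$ and of $w_1,w_2$ reproduce the classical cubic-spline system \emph{exactly}, so that it is the differences, not the coefficients themselves, that carry the gain $(\alpha\overline h)^2$. This demands careful two-term expansions of $E_{j-1}$ and $F_{j-1}$ and of the $\tanh$–$\cosh$ combinations in $w_1,w_2$, together with the verification that the remainders are $O((\alpha\overline h)^2)$ \emph{uniformly in $j$}, i.e.\ with constants depending on $[a,b]$ but not on the individual $h_j$. Securing this uniformity, and the attendant uniform bound $\|(I+M)^{-1}\|_\infty\le 3$, is the delicate step; once it is in hand, the remaining work is the bookkeeping sketched above.
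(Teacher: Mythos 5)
Your proposal is correct and takes essentially the same route as the paper: the nodal system $(I+M)\delta''=b$ from \eqref{delta tridiag} with $\|(I+M)^{-1}\|_{\infty}\leq 3$ for small $\alpha$, the Mean Value Theorem (end rows) and triangle inequality (interior rows) bound giving \eqref{delta inequality}, the interval representation $\delta''(x)=w_1\delta''_{j-1}+w_2\delta''_j+z_1\sigma''_{j-1}+z_2\sigma''_j$ to pass from nodal to global control of $\delta''$, and the Rolle-point integrations for $i=1,0$ are exactly the paper's steps in the same order. Your small additions (noting that types II and III force $\delta''_0=\delta''_N=0$ outright, and the rearrangement $z_1\sigma''_{j-1}+z_2\sigma''_j=z_1(\sigma''_{j-1}-\sigma''_j)+(z_1+z_2)\sigma''_j$) are harmless refinements, and the uniform-in-$j$ coefficient asymptotics you flag as the delicate point are precisely what the paper itself disposes of with ``more computation.''
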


\begin{cor}\label{k=2 corollary}
 Let $X$ be a partition for $[a,b]$ and $f\in C^4[a,b]$.  Suppose that $t_{\alpha}[f]:=t_\alpha[f(X)]\in T_{\alpha}^{2}(X)$ interpolates $f(X)$ with type I (II, III) end conditions.  For $i=0,1,$ or $2$ and sufficiently small $\alpha$,
    \[
    \| D^i(t_\alpha[f]-f)  \|_{L^{\infty}[a,b]}\leq C \overline{h}^{4-i},
    \]
    where $C:=C_{\alpha ,f, X}>0$.  The same is true for $s_\alpha[f]\in S_{\alpha}^2(X)$.
\end{cor}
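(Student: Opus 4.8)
The plan is to derive the corollary from Theorem \ref{k=2 convergence} together with the classical convergence theory of cubic spline interpolation, joined by a triangle inequality. Writing $Y=f(X)$, the cubic interpolant $\sigma[Y]$ appearing in Theorem \ref{k=2 convergence} is precisely $\sigma[f]$, the cubic spline interpolating $f$ at $X$ with the matching endpoint condition. I would therefore split, for $i=0,1,2$,
\[
D^i\bigl(t_\alpha[f]-f\bigr)=D^i\bigl(t_\alpha[f]-\sigma[f]\bigr)+D^i\bigl(\sigma[f]-f\bigr)
\]
and estimate the two pieces separately.

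For the first piece, Theorem \ref{k=2 convergence} gives directly
\[
\bigl\|D^i(t_\alpha[f]-\sigma[f])\bigr\|_{L^\infty[a,b]}\le C(\alpha\overline{h})^{4-i}\max\bigl\{\|\sigma'''\|_{L^\infty([a,b]\setminus X)},\|\sigma''\|_{L^\infty(X)}\bigr\}.
\]
Because $\alpha$ is held fixed in the corollary (the constant is allowed to depend on $\alpha$), the factor $\alpha^{4-i}$ is absorbed into $C$. The point I would emphasize is that the maximum on the right is bounded \emph{independently of} $\overline{h}$: since $f\in C^4[a,b]$, the classical derivative estimates for the cubic interpolant give $\|f^{(m)}-\sigma^{(m)}\|_{L^\infty([a,b]\setminus X)}=O(\overline{h}^{4-m})$ for $m=2,3$, whence $\|\sigma''\|_{L^\infty}\le\|f''\|_{L^\infty}+O(\overline{h}^2)$ and $\|\sigma'''\|_{L^\infty([a,b]\setminus X)}\le\|f'''\|_{L^\infty}+O(\overline{h})$. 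For $\overline{h}$ small, both are dominated by a constant depending only on $f$, so this piece is $O_{\alpha,f}(\overline{h}^{4-i})$.

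For the second piece I would invoke the classical cubic spline convergence estimates with the corresponding endpoint condition (see \cite{Hall,Birkhoff_deBoor}), which yield $\|D^i(\sigma[f]-f)\|_{L^\infty[a,b]}\le C\overline{h}^{4-i}\|f^{(4)}\|_{L^\infty[a,b]}$ for $i=0,1,2$; adding the two estimates gives the claim for $t_\alpha[f]$ with $C=C_{\alpha,f,X}$. To transfer the result to $s_\alpha[f]$ I would use the factorization employed in the proof of the uniqueness theorem and in the remark following Proposition \ref{k=1 convergence}: setting $g=\sech(\alpha\cdot)f\in C^4[a,b]$ one has $s_\alpha[f]=\cosh(\alpha\cdot)\,t_\alpha[g]$, so that
\[
s_\alpha[f]-f=\cosh(\alpha\cdot)\bigl(t_\alpha[g]-g\bigr).
\]
Applying the Leibniz rule to $D^i$ of this product, and using that $\cosh(\alpha\cdot)$ and its derivatives up to order $i$ are bounded on $[a,b]$ by a constant depending on $\alpha$, the estimates just obtained for $t_\alpha[g]-g$ carry over to $s_\alpha[f]-f$ with a constant depending on $\alpha,f,X$.

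The main obstacle is the uniform-in-$\overline{h}$ control of the factor $\max\{\|\sigma'''\|_{L^\infty([a,b]\setminus X)},\|\sigma''\|_{L^\infty(X)}\}$ produced by Theorem \ref{k=2 convergence}; this is exactly where the stronger hypothesis $f\in C^4$ (rather than merely $C^2$) is used, since it guarantees through the classical derivative estimates that $\sigma''$ and the piecewise-constant $\sigma'''$ do not grow as the partition is refined. Once this uniform bound is in hand, the remainder is the triangle inequality and the product rule.
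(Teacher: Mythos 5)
Your proposal is correct and follows essentially the same route as the paper: the triangle inequality splitting $t_\alpha[f]-f=(t_\alpha[f]-\sigma[f])+(\sigma[f]-f)$, Theorem \ref{k=2 convergence} for the first piece with $\alpha^{4-i}$ absorbed into the constant, the classical estimates of Hall and Birkhoff--de Boor for the second, and the factorization $f=\cosh(\alpha\cdot)g$ with the Leibniz rule to transfer the result to $s_\alpha[f]$. Your explicit verification that $\max\{\|\sigma'''\|_{L^\infty([a,b]\setminus X)},\|\sigma''\|_{L^\infty(X)}\}$ stays bounded as the partition is refined is precisely the (unstated) reason the paper cites the third-derivative bound $\|D^3(\sigma[f]-f)\|_{L^\infty}\leq C_{f,X}\overline{h}$, so you have simply made explicit a step the paper leaves implicit.
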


\begin{proof}
The result for $t_\alpha[f]$ follows from Theorem \ref{k=2 convergence}, the triangle inequality, and known convergence rates for $\sigma[f]:=\sigma[f(X)]$:
\begin{align*}
  \| D^i(\sigma[f]-f)  \|_{L^{\infty}[a,b]} &\leq C_{f,X} \overline{h}^{4-i};\quad i=0,1,2 \quad {\text{and}}\\
 \| D^3(\sigma[f]-f)  \|_{L^{\infty}[a,b]} &\leq C_{f,X} \overline{h} 
 \end{align*}
found in \cite{Hall} and \cite{Birkhoff_deBoor}, respectively.  The constants depend on the norms of various derivatives of $f$ and the \emph{mesh ratio}, $|X|=\overline{h}/\min{h_j}$.
To see the result for $s_\alpha[f]\in S_{\alpha}^2(X)$, note that any function $f\in C^4[a,b]$ may be written $f(x)=\cosh(\alpha x)g(x)$, where $g\in C^4[a,b]$ as well.  We have
\[
\| D^{i}(s_{\alpha}[f] - f) \|_{L^{\infty}[a,b]} =  \| D^{i}(\cosh(\alpha \cdot)(t_{\alpha}[g] - g)) \|_{L^{\infty}[a,b]} 
\]
so the result follows from the Leibniz rule and the corresponding result for $t_\alpha\in T_{\alpha}^2(X)$.
\end{proof}


\begin{remark*}
One powerful aspect of modeling with cubic splines is the ability to preserve certain qualities of the underlying data set to be interpolated.  Unfortunately, $s_\alpha[Y]\in S_{\alpha}^2(X)$ (or $t_\alpha[Y]\in T_{\alpha}^2(X)$) need not share properties of $Y$ such as positivity, monotonicity, or convexity.  As seen with interpolation with cubic splines, the trade off for preserving the shape of the data is giving up some smoothness of the interpolant.  If we choose to specify the first derivatives rather than enforcing continuity of the second derivative at each internal $x_j\in X$, then we can solve \eqref{polyhyperbolic} in terms of the values $Y=(y_j)$ and $Y'=(y'_j)$ on the interval $[x_{j-1},x_{j}]$:
\begin{equation}\label{Hermite}
\begin{split}
s_{\alpha}(x) &= y_j+y'_j(x-x_j)\\
&\quad+\left( \dfrac{3(y_{j+1}-y_j)-h_{j+1}(2y'_j+y'_{j+1})}{h_{j+1}^2}+O(\alpha^2)\right)(x-x_j)^2\\
&\qquad+\left(\dfrac{h_{j+1}(y'_j+y'_{j+1})-2(y_{j+1}-y_j)}{h_{j+1}^3}+O(\alpha^2)\right)(x-x_j)^3+O(\alpha^2)\\
&=\sigma(x)+O(\alpha^2 h_{\max}^2).
\end{split}
\end{equation}
Here $\sigma$ is the cubic Hermite spline interpolant for the same data.  In order for $s_\alpha$ to be approximately shape preserving, we may first generate a cubic interpolant with the property.  Algorithms to preserve positivity, monotonicity, and convexity for cubic splines have been well established.  See, for instance, \cite{Dougherty} and the references therein.
Essentially, these algorithms provide an interval from which to choose each parameter $y'_j\in Y'$ in such a way that certain inequalities hold for $\sigma$.  In order to ensure that $s_\alpha$ has the same property, we first choose the parameters of $\sigma$ to satisfy the strict inequalities.  This gives us room to reduce $\alpha$ small enough so that the error term does not affect the relevant inequality for $\sigma$.
\end{remark*}
\section{Local Bases}
In this section we develop a $B$-spline basis for both families $S_\alpha ^k(X)$ and $T_\alpha ^k(X)$, for $k=1,2$. These bases will have minimal support, in the $k=1$ case, we need 2 intervals of the form $[x_j,x_{j+1}]$ while the $k=2$ case requires 4 such intervals.  To see this we need the following result.
\begin{lem}\label{support}
Let $f\in S_\alpha ^k(X)$ (or $T_\alpha ^k(X)$), for $k=1,2$, with appropriate endpoint conditions specified.   If $\text{supp}(f)\subset [x_{j_0},x_{j_0+m}]$ where $m<2k$, then $f=0$.
\end{lem}
\begin{proof}
If $m<2k$, we get a system of linear equations whose only solution is $f=0$.  When $k=1$, we have the equations $f(x_0)=f(x_1)=0$, solving the invertible system of equations for the coefficients yields $f=0$.  The $k=2$ case works similarly.  Suppose that $m=3$.  Then we would need to solve for 12 coefficients using the 6 endpoint conditions: $f^{(i)}(a)=f^{(i)}(b)=0$, for $i=0,1,2$ and the 6 interior smoothness conditions.  Hence the result follows from Theorem \ref{k=2 uniqueness} and the fact that the system of equations for the coefficients is linear and homogeneous.
\end{proof}

Let us now build the bases for $k=1$.  Interior bases ($1\leq j \leq N-1$) are of the form
\[
t_{\alpha, j} (x)= \begin{cases} \dfrac{ \tanh(\alpha x) -\tanh(\alpha x_{j-1})}{\tanh(\alpha x_{j})-\tanh (\alpha x_{j-1})}, &  x \in [x_{j-1},x_j] \\
 \dfrac{ \tanh(\alpha x_{j+1}) -\tanh(\alpha x)}{\tanh(\alpha x_{j+1})-\tanh (\alpha x_{j})}, & x\in [x_j,x_{j+1}]
\end{cases}
\]
and
\[
s_{\alpha, j} (x)= \begin{cases} \dfrac{ \sinh(\alpha (x-x_{j-1})) }{\sinh(\alpha h_j)}, &  x \in [x_{j-1},x_j] \\
 \dfrac{ \sinh(\alpha (x_{j+1}-x)) }{\sinh(\alpha h_{j+1})}, & x\in [x_j,x_{j+1}].
\end{cases}
\]
and satisfy $t_{\alpha, j} (x_k) = s_{\alpha, j} (x_k)=\delta_{j,k}$, $0\leq k \leq N$.  The graphs are shown below.

\begin{figure}[h]
    \centering
        \includegraphics[scale=0.33]{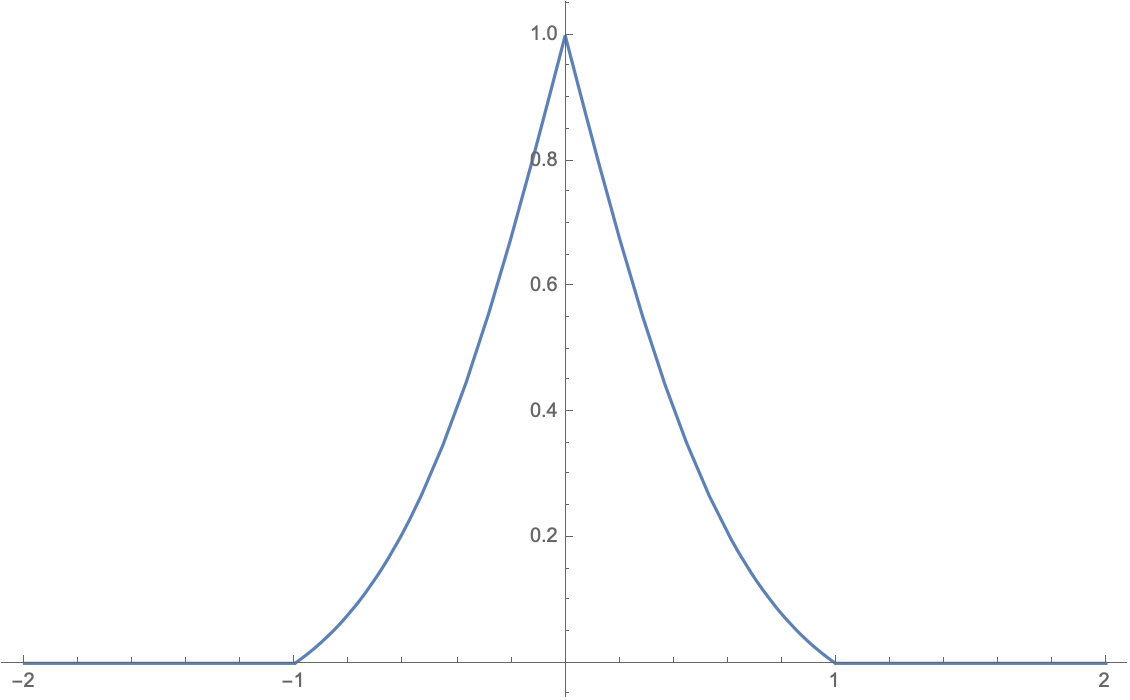}\includegraphics[scale=0.33]{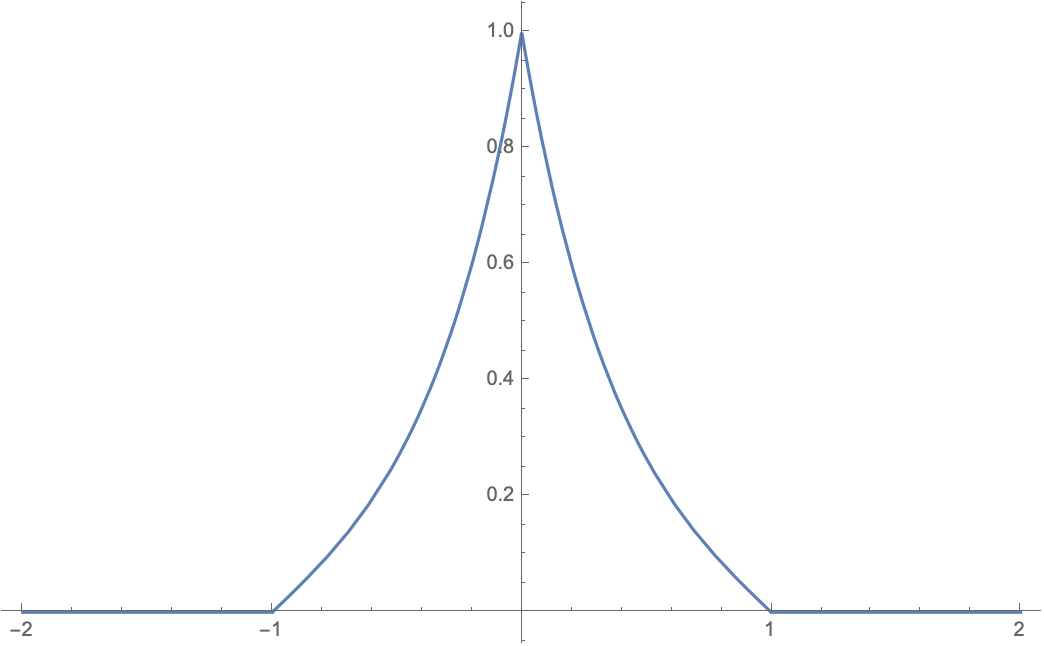}
        \caption{$t_{1.5,0}(x)$ and $s_{2.5,0}(x)$ plotted with $h_j=1$}
\end{figure}

For $k=2$, we present the uniform case $h_j\equiv h$ centered at $x=0$.   We have 
\[
s_{\alpha}(x) = \sum_{l=-2}^{1}\dfrac{s_{l}(x)}{e^{4\alpha h}-4\alpha h e^{2\alpha h}-1}\chi_{[lh,(l+1)h]}(x),
\]
which is an even function, with 
\begin{align*}
    s_{-2}(x) &= e^{-\alpha x}(1+\alpha(2h+x))+e^{4\alpha h +\alpha x}(-1+\alpha(2h+x)),\\
    s_{-1}(x) &= -e^{-\alpha x}(1+\alpha x)-2e^{2\alpha h -\alpha x}(1+\alpha(h+x))\\
    &\qquad -2e^{2\alpha h +\alpha x}(-1+\alpha(h+x)) +e^{4\alpha h -\alpha x}(1-\alpha x),\\
    s_{0}(x)  &= -e^{\alpha x}(1-\alpha x)-2e^{2\alpha h +\alpha x}(1+\alpha(h-x))\\
    &\qquad -2e^{2\alpha h -\alpha x}(-1+\alpha(h-x)) +e^{4\alpha h -\alpha x}(1+\alpha x),\\
    s_{1}(x) &= e^{\alpha x}(1+\alpha(2h-x))+e^{4\alpha h -\alpha x}(-1+\alpha(2h-x)).
\end{align*}

Similarly for $t_\alpha$ we have 
\[
t_\alpha(x) = \sum_{l=-2}^{1}\dfrac{t_l(x)}{2\alpha h -\sinh(2\alpha h)}\chi_{[lh,(l+1)h]}(x)
\]
where
\begin{align*}
t_{-2}(x) &= \sech(\alpha x)\left[\sinh(\alpha(2h+x))-\alpha(2h+x)\cosh(\alpha(2h+x)) \right]\\
    t_{-1}(x) &= 2\alpha h+2\alpha x+\alpha x\sech(\alpha x)\cosh(\alpha(2h-x))\\&\qquad-\sech(\alpha x)\sinh(\alpha(2h+x))- 2\tanh(\alpha x),\\
    t_{0}(x)  &= -\sech(\alpha x)\left[\alpha x \cosh(\alpha(2h-x))-2\alpha(h-x)\cosh(\alpha x)\right.\\&\qquad\left.+\sinh(\alpha(2h-x))-2\sinh(\alpha x)  \right],\\
    t_{1}(x) &= \sech(\alpha x)\left[\sinh(\alpha(2h-x))-\alpha(x-2h)\cosh(\alpha(2h-x)) \right].
\end{align*}

\begin{figure}[h]
    \centering
        \includegraphics[scale=.4]{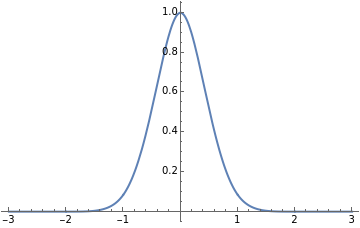}\includegraphics[scale=.4]{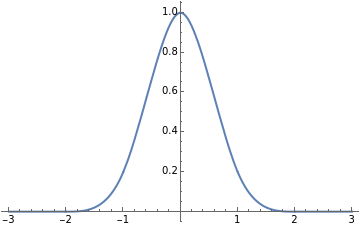}
        \caption{$t_{1.5,0}(x)$ and $s_{1.5,0}(x)$ plotted with $h_j=1$}
\end{figure}

In general, for a $k$-th order spline, we could solve the $(2k^2)\times (2k^2)$ system
\begin{align*}
s^{(l)}(x_{j-k})  = 0; \quad &0\leq l \leq 2k-2 \\
s^{(l)}(x_{m}-) = s^{(l)}(x_{m}+); \quad &0\leq l \leq 2k-2 , \quad j-k+1\leq m \leq j-1 \\
s^{(l)}(x_j) =\delta_{0,l};\quad &0\leq l \leq k-1.
\end{align*}
 then extend to the right side of $x_j$.

Note that $x\in[a,b]$, is in the support of at most $2k$ $B$-splines.  For $x_j\in X$, we need only $2k-1$ $B$-splines.


\section{Higher order splines}

Having exhibited properties for splines of order $k=1$ and $k=2$, one is naturally led to seek results for higher order splines $S_\alpha^k(X)$ and $T_\alpha^k(X)$, where $k>2.$  The expectation is that properties from polynomial splines carry over for sufficiently small $\alpha>0$.  In the remainder of this note, we prefer to explore the interpolation problem when $X$ is allowed to be infinite.  This problem was studied in \cite{ledford}, and our goal is to establish the existence of spline interpolants for scattered data.  

In particular, we assume that $X=(x_j:j\in\mathbb{Z})$ is a \emph{complete interpolating sequence} (CIS) which means that the sequence $(e^{ix_j\cdot}:j\in\mathbb{Z})$ is a Riesz basis for $L
_2([-\pi,\pi])$.  Among other things, we have the \emph{Riesz basis inequality}
\begin{equation}\label{Riesz basis inequality}
    C_X^{-1}\sum_{j\in\mathbb{Z}}|a_j|^2 \leq \int_{-\pi}^{\pi} \left|\sum_{j\in\mathbb{Z}}a_j e^{-ix_j\xi}\right|^2 {\rm d}\xi \leq C_X \sum_{j\in\mathbb{Z}}|a_j|^2, 
\end{equation}
for some $C_X>0$ and all $(a_j)\in \ell_2$.  Associated to a Riesz basis $X$, we define for $n\in\mathbb{Z}$, the \emph{prolongation operator} $A_n: L_2([-\pi,\pi]) \rightarrow L_2([-\pi,\pi])$ by
\begin{equation}\label{prolongation operator}
    A_n[f](\xi)=A_n\left(\sum_{j\in\mathbb{Z}}a_je^{-ix_j\xi}\right):=\sum_{j\in\mathbb{Z}}a_je^{-2\pi i n x_j}e^{-ix_j\xi}; \quad |\xi| \leq \pi.
\end{equation}
In light of \ref{Riesz basis inequality}, the operators $A_n$ and their adjoints $A_n^*$ are uniformly bounded.

We will make extensive use of the \emph{Fourier transform}, which for $g\in L_1(\mathbb{R})$ is given by
\[
\hat{g}(\xi):=\int_{\mathbb{R}}g(x)e^{-ix\xi}{\rm d}x,
\]
and its extension to $g\in L_2(\mathbb{R})$ is denoted $\mathcal{F}[g]$.  Using this convention, the inversion formula is given by
\[
g(x)=\frac{1}{2\pi}\int_{\mathbb{R}}\hat{g}(\xi)e^{ix\xi}{\rm d}\xi
\]
and Plancherel's formula is 
\[
\sqrt{2\pi}\| g \|_{L_2(\mathbb{R})}= \| \mathcal{F}[g] \|_{L_2(\mathbb{R})}.
\]
We will restrict our attention to \emph{band-limited} data.  To this end, we assume that the data $Y$ are samples taken from a function in the Paley-Wiener space
\[
PW_\pi:=\left\{ f\in L_2(\mathbb{R}): \mathcal{F}[f] = 0 \text{ on } \mathbb{R}\setminus[-\pi,\pi]  \right\}.
\]
That is, $Y:=Y[f(X)]=(f(x_j): j\in\mathbb{Z})$, where $f\in PW_\pi$ and $X$ is a CIS.

Our argument closely follows the one found in \cite{LEDFORD20131}.  However, our splines are not covered under the umbrella of \emph{regular interpolators} found there.  In the Fourier domain, we have that $s\in S_{\alpha}^{k}(X)$ enjoys the representation
\[
\hat{s}(\xi)=(-1)^{k}(\xi^2+\alpha^2)^{-k}\sum_{j\in\mathbb{Z}}a_je^{-ix_j\xi},
\]
in the distributional sense for some $(a_j)\in\ell_2$.

Our next result concerns two operators associated to the interpolation problem, $B_{\alpha,k}, M_{\alpha,k}:L_2([-\pi,\pi])\to L_2([-\pi,\pi])$ which are defined as follows:
\begin{align*}
B_{\alpha,k}[g](\xi)&:= \sum_{n\neq 0}A_n^{*}\left(\left(\left(\cdot+2\pi n\right)^{2}+\alpha^{2}\right)^{-k}A_{n}[g]\right)(\xi)\\
M_{\alpha,k}[g](\xi)&:=(\alpha^2+\xi^2)^{-k}g(\xi).
\end{align*}

\begin{thm}
    Suppose that $\alpha>0$ and $k\in\mathbb{N}$. For any $g \in L_2([-\pi,\pi])$ there exists constants $ C_0,C_1 > 0$, independent of $g$, such that
    \[
    C_0\|g\|^{2}_{L_2([-\pi,\pi])} 
        \leq \left|\left\langle (M_{\alpha,k}+B_{\alpha,k})[g],g\right\rangle\right|
        \leq C_{1}\|g\|^{2}_{L_2([-\pi,\pi])}.
\]

\end{thm}

\begin{proof}
 We begin with the upper bound.
\begin{align*}
\left|\left\langle (M_{\alpha,k}+B_{\alpha,k})[g],g\right\rangle\right| =& \left| \int_{-\pi}^{\pi}(M_{\alpha,k}+B_{\alpha,k})[g](\xi)\overline{g(\xi)} {\rm d}\xi\right|\\
\leq & \sum_{n\in\mathbb{\mathbb{Z}}}\int_{-\pi}^{\pi}|A_{n}^{*}(((\cdot+2\pi n)^{2}+\alpha^{2})^{-k}A_{n}[g])(\xi)\overline{g(\xi)}|{\rm d}\xi\\
\leq & C_X\sum_{n\in\mathbb{\mathbb{Z}}}\int_{-\pi}^{\pi}(\xi + 2\pi n)^{2}+\alpha^{2})^{-k}A_{n}[g](\xi) \overline{g(\xi)}{\rm d}\xi \\
\leq & C_X\sum_{n\in\mathbb{\mathbb{Z}}} \sup_{|t|\leq\pi}(t + 2\pi n)^{2}+\alpha^{2})^{-k}\int_{-\pi}^{\pi}A_{n}[g](\xi)\overline{g(\xi)}{\rm d}\xi\\
\leq & C_X^2\sum_{n\in\mathbb{\mathbb{Z}}} \sup_{|t|\leq\pi}(t + 2\pi n)^{2}+\alpha^{2})^{-k}\int_{-\pi}^{\pi}|g(\xi)|^2{\rm d}\xi\\
\leq & C_X^2 \left(\alpha^{-2k}+\sum_{n\neq 0}((2|n|-1)^2\pi^2+\alpha^2)^{-k} \right)\| g\|_{L_2([-\pi,\pi])}^2\\
\end{align*}

We have used the triangle inequality and then Tonelli's theorem to switch the sum and the integral in the first inequality.  The second and fourth inequality follow from \eqref{Riesz basis inequality}, while the third is monotonicity of the integral.  Finally, the extreme value theorem from Calculus provides the final inequality. 

The lower bound is somewhat easier since the upper bound allows us to use the dominated convergence theorem to interchange the integral and the sum.
\begin{align*}
\left|\left\langle (M_{\alpha,k}+B_{\alpha,k})[g],g\right\rangle\right| =& \left| \int_{-\pi}^{\pi}(M_{\alpha,k}+B_{\alpha,k})[g](\xi)\overline{g(\xi)} {\rm d}\xi\right|\\
=& \left|\sum_{n\in\mathbb{Z}}\int_{-\pi}^{\pi} A_{n}^{*}(((\cdot+2\pi n)^{2}+\alpha^{2})^{-k}A_{n}[g])(\xi)\overline{g(\xi)}{\rm d}\xi  \right|\\
=&\left|\sum_{n\in\mathbb{Z}}\int_{-\pi}^{\pi}((\xi+2\pi n)^{2}+\alpha^{2})^{-k}A_{n}[g](\xi)\overline{A_n[g](\xi)} {\rm d}\xi \right|\\
\geq &\int_{-\pi}^{\pi}(\xi^2+\alpha^{2})^{-k}|g(\xi)|^2 {\rm d}\xi\\
\geq& (\pi^2+\alpha^2)^{-k} \|g \|_{L_2([-\pi,\pi])}^2
    \end{align*}

The second equality may be justified by the dominated convergence theorem, and the third is just the adjoint relation.  The first inequality follows from the positivity of the summands, while the final inequality follows from the monotonicity of the integral.

\end{proof}

Throughout the sequel, we consider an arbitrary but fixed $f\in PW_\pi$ sampled at a fixed but otherwise arbitrary CIS $X=(x_j:j\in\mathbb{Z})$. Since $\hat{f}\in L_2([-\pi,\pi])$, we may define the function $\varphi_f$ via
\begin{equation}\label{phi}
\varphi_{f}(\xi) := (-1)^k(M_{\alpha,k}+B_{\alpha,k})^{-1}\hat{f}(\xi)=\sum_{j\in\mathbb{Z}}c[f]_je^{-ix_j\xi};\qquad |\xi|\leq\pi.
\end{equation}

Consider the function $I[f]$, defined by its Fourier transform:
\begin{equation}\label{hat_I}
    \widehat{I[f]}(\xi):=(-1)^k(\alpha^2+\xi^2)^{-k}\varphi_{f}(\xi).
\end{equation}
Since $\varphi_f\in L_2^{loc}(\mathbb{R})$, we have $\widehat{I[f]}\in (L_1\cap L_2)(\mathbb{R})$, which means that the Fourier inversion theorem applies, and $I[f]\in (L_2\cap C_0)(\mathbb{R})$.  In particular, $I[f](x)$ is well defined and for all $n\in\mathbb{Z}$, we have
\begin{align*}
    2\pi I[f](x_n) =& \int_{\mathbb{R}}\widehat{I[f]}(\xi)e^{i x_n\xi}{\rm d}\xi\\
    =&(-1)^k\int_{-\pi}^{\pi}\sum_{j\in\mathbb{Z}} (\alpha^2+(\xi+2\pi j)^2)^{-k} A_j[\varphi_f](\xi) A_j[e^{ix_n\cdot}](\xi){\rm d}\xi\\
    =&(-1)^k\int_{-\pi}^{\pi}\sum_{j\in\mathbb{Z}} A_j^*\left( (\alpha^2+(\cdot+2\pi j)^2)^{-k} A_j[\varphi_f](\cdot)\right)(\xi) e^{ix_n\xi}{\rm d}\xi\\
    =&\int_{-\pi}^{\pi}(M_{\alpha,k}+B_{\alpha,k})[\varphi_f](\xi)e^{ix_n\xi}{\rm d}\xi \\
    =&\int_{-\pi}^{\pi}\hat{f}(\xi)e^{ix_n\xi}{\rm d}\xi\\
    =&2\pi f(x_n).
\end{align*}

Hence $I[f]$ interpolates $f$ at $X$.

Given the form of $\widehat{I[f]}$, it is clear that $I[f]$ satisfies $I[f]\in C^{2k-2}(\mathbb{R})$ and the ordinary differential equation
\begin{equation}\label{eqn: ODE}
(D^2-\alpha^2 )^k (I[f])=\sum_{j\in\mathbb{Z}}c[f]_j \delta(\cdot-x_j), 
\end{equation}
where $c[f]\in\ell^2$ is chosen by \eqref{phi}.  That is, $I[f]$ is a polyhyperbolic spline. As a consequence of the celebrated Paley-Wiener theorem, we can interpolate any sequence $(a_j)\in\ell^2$ by first finding $f\in PW_\pi$ which satisfies $f(x_j)=a_j$, then using Theorem 1 to get $c[f]\in\ell^2$.  

We summarize these results in the following theorem.
\begin{thm}\label{thm: high degree main}
    Suppose that $b\in\ell^2$ and $X=(x_j)$ is a complete interpolating sequence.  There exists a solution of \eqref{polyhyperbolic}, denoted $I[b]$, that satisfies
    \begin{enumerate}
        \item $I[b]\in (C^{2k-2}_0\cap L_2)(\mathbb{R})$
        \item $I[b](x_n)=b_n$ for all $n\in\mathbb{Z}$.
    \end{enumerate}   
\end{thm}

\begin{remark*}
    If we require that our sampled function $f\in PW_\pi$ satisfies
    \[
    |f(x)|\leq C \cosh(\alpha x),
    \]
    then we may solve the interpolation problem for $t_\alpha\in T_{\alpha}^k(X)$.
\end{remark*}

Having provided a solution to the interpolation problem, we may now follow the outline of \cite{LEDFORD20131,Madych-Lyubarski,Schlumprecht-Sivakumar} to prove the following recovery result concerning polyhyperbolic splines.

\begin{thm}
    Suppose that $f\in PW_\pi$ and $X=(x_j)$ is a CIS.  Let $I_k[f]$ denote the polyhyperbolic interpolant whose Fourier transform is defined in \eqref{hat_I}.  Then we have the following
    \begin{align*}
        a.& \lim_{k\to\infty}\| I_k[f]-f \|_{L_2(\mathbb{R})}=0, \text{ and}\\
        b. &\lim_{k\to\infty}|I_k[f](x)-f(x)|=0,\text{ uniformly on }\mathbb{R}. 
    \end{align*}
\end{thm}


\section{Algorithm}

We include a Mathematica\texttrademark{} source code for $s_\alpha\in S_{\alpha}^2(X)$ for the interested reader.

{
\lstset{
	tabsize=4,
	frame=single,
	language=mathematica,
	basicstyle=\scriptsize\ttfamily,
	keywordstyle=\bfseries,
	backgroundcolor=\color{gris245},
	commentstyle=\color{olive},
	showstringspaces=false,
	emph={[1]x, y, acoe, bcoe, ccoe, dcoe, allcoe, s1, s2, ds, dds, mat1, mat2, yend},emphstyle={[1]\itshape},
	emph={[2]i}, emphstyle={[2]\color{blue}},
	emph={[3]n},emphstyle={[3]\color{purple}},
        emph={[4]p}, emphstyle={[4]\color{teal}}
	}

\begin{lstlisting}
(*Enter in x, y, and p values*)
x = {(*Input list of values*)};
y = {(*Input list of values or functions of x*)};
t = (*Input type of end condition*);
yend = {(*Input end conditions for type 1 or 2*)};
p = (*Input tension parameter*);
n = Length[x];

(*Constructs coefficient lists*)
acoe = Table[Symbol["a" <> ToString[i]], {i, n - 1}];
bcoe = Table[Symbol["b" <> ToString[i]], {i, n - 1}];
ccoe = Table[Symbol["c" <> ToString[i]], {i, n - 1}];
dcoe = Table[Symbol["d" <> ToString[i]], {i, n - 1}];
allcoe = {};
For[i = 1, i <= (n - 1), i++, 
  AppendTo[allcoe, Symbol["a" <> ToString[i]]];
  AppendTo[allcoe, Symbol["b" <> ToString[i]]];
  AppendTo[allcoe, Symbol["c" <> ToString[i]]];
  AppendTo[allcoe, Symbol["d" <> ToString[i]]];
];

(*Creates spline equations and puts them in a list*)
s1 = Array[0 &, n - 1];
s2 = Array[0 &, n - 1];
For[i = 1, i < n, i++, 
  s1[[i]] = ((acoe[[i]] + bcoe[[i]]*x[[i]])*Exp[-p*x[[i]]])
            + ((ccoe[[i]] + dcoe[[i]]*x[[i]])*Exp[p*x[[i]]]);
];
For[i = 1, i < n, i++, 
  s2[[i]] = ((acoe[[i]] + bcoe[[i]]*x[[i + 1]])*Exp[-p*x[[i + 1]]]) 
            + ((ccoe[[i]] + dcoe[[i]]*x[[i + 1]])
            *Exp[p*x[[i + 1]]]);
];

(*Creates the first derivative of spline equations and puts
them in a list*)
ds = Array[0 &, n - 2];
For[i = 1, i <= n - 2, i++, 
  ds[[i]] = (bcoe[[i]]*Exp[-p*x[[i + 1]]] 
            + dcoe[[i]]*Exp[p*x[[i + 1]]] - Exp[-p*x[[i + 1]]]*p
            *(acoe[[i]] + bcoe[[i]]*x[[i + 1]]) 
            + Exp[p*x[[i + 1]]]*p
            *(ccoe[[i]] + dcoe[[i]]* x[[i + 1]])) 
            - (bcoe[[i + 1]]*Exp[-p*x[[i + 1]]] 
            + dcoe[[i + 1]]*Exp[p*x[[i + 1]]] - Exp[-p*x[[i + 1]]]
            *p*(acoe[[i + 1]] + bcoe[[i + 1]]*x[[i + 1]]) 
            + Exp[p*x[[i + 1]]]*p
            *(ccoe[[i + 1]] + dcoe[[i + 1]]* x[[i + 1]]));
];

(*Creates the second derivative of spline equations and puts 
them in a list*)
dds = Array[0 &, n - 2];
For[i = 1, i <= n - 2, i++, 
  dds[[i]] = (-2*bcoe[[i]]*Exp[-p*x[[i + 1]]]*p 
             + 2*dcoe[[i]]*Exp[p*x[[i + 1]]]*p + Exp[-p*x[[i + 1]]]
             *(acoe[[i]] + bcoe[[i]]*x[[i + 1]])*p^2 
             + Exp[p*x[[i + 1]]]*(ccoe[[i]] + dcoe[[i]]*x[[i + 1]])
             *p^2) - (-2*bcoe[[i + 1]]*Exp[-p*x[[i + 1]]]*p 
             + 2*dcoe[[i + 1]]*Exp[p*x[[i + 1]]]*p 
             + Exp[-p*x[[i + 1]]]
             *(acoe[[i + 1]] + bcoe[[i + 1]]*x[[i + 1]])
             *p^2 + Exp[p*x[[i + 1]]] 
             * (ccoe[[i + 1]] + dcoe[[i + 1]]*x[[i + 1]])*p^2);
];

(*Sets up invertible matrix by putting the lists s1, s2, ds, 
and dds in an array*)
mat1 = {};
For[i = 1, i < n, i++, 
  AppendTo[mat1, Coefficient[s1[[i]], allcoe]];
];
For[i = 1, i < n, i++, 
  AppendTo[mat1, Coefficient[s2[[i]], allcoe]];
];
For[i = 1, i <= n - 2, i++, 
  AppendTo[mat1, Coefficient[ds[[i]], allcoe]];
];
For[i = 1, i <= n - 2, i++, 
  AppendTo[mat1, Coefficient[dds[[i]], allcoe]];
];
(*Adds end conditions to matrix*)
If[t == 1, 
  AppendTo[mat1, Coefficient[(bcoe[[1]]*Exp[-p*x[[1]]] 
    + dcoe[[1]]*Exp[p*x[[1]]] 
    - Exp[-p*x[[1]]]*p*(acoe[[1]] + bcoe[[1]]*x[[1]]) 
    + Exp[p*x[[1]]]*p*(ccoe[[1]] + dcoe[[1]]* x[[1]])), 
    allcoe];
  ];
  AppendTo[mat1, Coefficient[(bcoe[[n - 1]]*Exp[-p*x[[n]]] 
    + dcoe[[n - 1]]*Exp[p*x[[n]]] 
    - Exp[-p*x[[n]]]*p*(acoe[[n - 1]] + bcoe[[n - 1]]*x[[n]]) 
    + Exp[p*x[[n]]]*p*(ccoe[[n - 1]] + dcoe[[n - 1]]* x[[n]])), 
    allcoe];
  ];,
  If[t == 2 || t == 3,
    AppendTo[mat1, Coefficient[(-2*bcoe[[1]]*Exp[-p*x[[1]]]*p 
      + 2*dcoe[[1]]*Exp[p*x[[1]]]*p 
      + Exp[-p*x[[1]]]*(acoe[[1]] + bcoe[[1]]*x[[1]])*p^2 
      + Exp[p*x[[1]]]*(ccoe[[1]] + dcoe[[1]]*x[[1]])*p^2),
      allcoe];
    ];
    AppendTo[mat1, Coefficient[(-2*bcoe[[n - 1]]*Exp[-p*x[[n]]]*p 
      + 2*dcoe[[n - 1]]*Exp[p*x[[n]]]*p 
      + Exp[-p*x[[n]]]*(acoe[[n - 1]] + bcoe[[n - 1]]*x[[n]])*p^2 
      + Exp[p*x[[n]]]*(ccoe[[n - 1]] + dcoe[[n - 1]]*x[[n]])*p^2), 
      allcoe];
    ];
  ];
];


(*Creates solution matrix*)
mat2 = {};
For[i = 1, i < n, i++, 
  AppendTo[mat2, y[[i]]];
];
For[i = 2, i <= n, i++, 
  AppendTo[mat2, y[[i]]];
];
For[i = 2*(n - 1), i < (n - 1)*4, i++, 
  AppendTo[mat2, 0];
];

(*Redefines allcoe as a list for all the values of 
the coefficients*)
allcoe = LinearSolve[mat1, mat2];

(*Final spline function*)
f[u_] = Sum[allcoe[[4*k - 3 ;; 4*k]].{Exp[-p*u], 
          u*Exp[-p*u], Exp[p*u], u*Exp[p*u]}
          *Piecewise[{{1, x[[k]] <= u < x[[k + 1]]}}, 0],
          {k, 1, n - 1}];

Plot[f[u], {u, x[[1]], x[[n]]}];

\end{lstlisting}

}


\bibliographystyle{plain} 
\bibliography{main}

\end{document}